\documentclass[11pt]{amsart}


\usepackage{amscd,amsmath,amssymb,fancyhdr,color}
\usepackage[utf8x]{inputenc}
\usepackage{amsfonts}
\usepackage{amsthm}
\usepackage{accents}
\usepackage{graphicx}
\usepackage{float}
\usepackage{subcaption}
\usepackage{verbatim}
\usepackage{indentfirst}
\usepackage{dsfont}
\usepackage{tikz}
\usepackage{tikz-cd}
\usetikzlibrary{matrix}
\usepackage{enumerate}
\usepackage{extarrows}

\usepackage{scalerel,stackengine}
\stackMath
\newcommand\reallywidehat[1]{%
	\savestack{\tmpbox}{\stretchto{%
			\scaleto{%
				\scalerel*[\widthof{\ensuremath{#1}}]{\kern-.6pt\bigwedge\kern-.6pt}%
				{\rule[-\textheight/2]{1ex}{\textheight}}
			}{\textheight}%
		}{0.5ex}}%
	\stackon[1pt]{#1}{\tmpbox}%
}

\usepackage{faktor}

\usepackage{BOONDOX-uprscr}

\usepackage[backref=page]{hyperref}
\renewcommand*{\backref}[1]{}
\renewcommand*{\backrefalt}[4]{%
    \ifcase #1 (Not cited.)%
    \or        (Cited on page~#2.)%
    \else      (Cited on pages~#2.)%
    \fi}

\hypersetup{
     colorlinks   = true,
     citecolor    = magenta
}

\newcommand{\K}{K\"ahler}

\DeclareMathOperator{\reg}{reg}

\DeclareMathOperator{\Proj}{Proj}
\DeclareMathOperator{\an}{an}

\numberwithin{equation}{section}

\def\eqref#1{(\ref{#1})}

\newcommand{\R}{{\mathbb R}}

\newcommand{\del}{\partial}
\newcommand{\delb}{\overline{\partial}}

\def\1{\sqrt{-1}\:}

\newcommand{\cntrct}                
{\hspace{2pt}\raisebox{1pt}{\text{$\lrcorner$}}\hspace{2pt}}

\newcommand{\ei}{\textup{i}}


\newcommand{\codim}{\operatorname{codim}}



\newcommand{\ie}{{\em i.e. }}
\newcommand{\eg}{{\em e.g. }}

\renewcommand{\to}{\longrightarrow}





\newtheorem*{theorem*}{Theorem}

\newcounter{Mycounter}[section]
\newcounter{lemma}[section]
\setcounter{lemma}{0}

\newcounter{claim}[section]
\setcounter{claim}{0}

\newcounter{sublemma}[section]
\setcounter{sublemma}{0}

\newcounter{corollary}[section]
\setcounter{corollary}{0}

\newcounter{theorem}[section]
\setcounter{theorem}{0}

\newcounter{conjecture}[section]
\setcounter{conjecture}{0}

\newcounter{proposition}[section]
\setcounter{proposition}{0}

\newcounter{definition}[section]
\setcounter{definition}{0}

\newcounter{example}[section]
\setcounter{example}{0}

\newcounter{remark}[section]
\setcounter{remark}{0}

\newcounter{problem}[section]
\setcounter{problem}{0}

\newcounter{question}[section]
\setcounter{question}{0}

\makeatletter

\@addtoreset{equation}{section}

\@addtoreset{footnote}{section}

\makeatother

\usetikzlibrary{arrows,chains,matrix,positioning,scopes}

\makeatletter
\tikzset{join/.code=\tikzset{after node path={%
			\ifx\tikzchainprevious\pgfutil@empty\else(\tikzchainprevious)%
			edge[every join]#1(\tikzchaincurrent)\fi}}}
\makeatother

\tikzset{>=stealth',every on chain/.append style={join},
	every join/.style={->}}

\makeatletter
\newtheorem*{rep@theorem}{\rep@title}
\newcommand{\newreptheorem}[2]{%
	\newenvironment{rep#1}[1]{%
		\def\rep@title{\ref{##1}}%
		\begin{rep@theorem}}%
		{\end{rep@theorem}}}
\makeatother

\newreptheorem{theorem}{Theorem}
        
\setcounter{tocdepth}{1}

\begin{document}
	
\newpage

\title[Vaisman theorem for LCK spaces]{Vaisman theorem for LCK spaces}

\author{Ovidiu Preda}
\address{Ovidiu Preda \newline
\textsc{\indent Institute of Mathematics ``Simion Stoilow'' of the Romanian Academy\newline 
	\indent 21 Calea Grivitei Street, 010702, Bucharest, Romania}}
\email{ovidiu.preda18@icloud.com; ovidiu.preda@imar.ro}

\author{Miron Stanciu}
\address{Miron Stanciu \newline
\textsc{\indent University of Bucharest, Faculty of Mathematics and Computer Science\newline 
	\indent 14 Academiei Str., Bucharest, Romania\newline
	\indent and\newline
	\indent Institute of Mathematics ``Simion Stoilow'' of the Romanian Academy\newline 
	\indent 21 Calea Grivitei Street, 010702, Bucharest, Romania}}
\email{mirostnc@gmail.com; miron.stanciu@fmi.unibuc.ro}

\thanks{Ovidiu Preda was partially supported by a grant of Ministry of Research and Innovation, CNCS - UEFISCDI, project no.
	PN-III-P1-1.1-TE-2019-0262, within PNCDI III. \\
	\indent Miron Stanciu was partially supported by a grant of Ministry of Research and Innovation, CNCS - UEFISCDI, project no.
	PN-III-P4-ID-PCE-2020-0025, within PNCDI III. \\\\
			\indent {\bf Keywords:} Complex spaces, locally conformally \K, blow-up.\\
			\indent	{\bf 2020 Mathematics Subject Classification:} 32C15, 53C55.
			}
\date{\today}

\begin{abstract}
Vaisman’s theorem for locally conformally K\" ahler (lcK) compact manifolds states that any lcK metric on a compact complex manifold which admits a K\" ahler metric is, in fact, globally conformally K\" ahler (gcK). In this paper, we extend this theorem to compact complex spaces with singularities.
\end{abstract}

\maketitle

\hypersetup{linkcolor=blue}
\tableofcontents

\section{Introduction}

The notion of locally conformally \K \ (lcK) manifolds was first introduced by I. Vaisman \cite{vaisman1976}. They are complex manifolds endowed with a hermitian metric satisfying the differential equation $d_\theta \omega := d\omega - \theta\wedge \omega = 0$ for some closed $1$-form $\theta$, which is called the Lee form of $\omega$. This is equivalent, as the name implies, to the $2$-form $\omega$ being locally conformal to a \K \ form. The form $\theta$ is exact if and only if $\omega$ is globally conformal to a \K \ form (gcK for short); as the point of lcK geometry is to work in non-K\" ahler geometry, we usually require this not to be the case.
Alternatively, lcK manifolds are quotients of \K \ manifolds by a discrete group of automorphisms which act as homotheties on the \K \ form. Since their introduction, they have been extensively studied (see \cite{OV} for an overview of the field and \eg \cite{mmo}, \cite{alex} for some recent results).

Grauert \cite{grauert} was the first to introduce K\" ahler forms on complex spaces, and some years later Moishezon \cite{moish} continued this study. Further results about existence of K\" ahler forms on images of K\" ahler spaces through holomorphic mappings were obtained by Varouchas in \cite{var84} and \cite{var89}.

In our previous paper \cite{PS}, we took a few steps towards transporting known results from the smooth case to lcK spaces with singularities. Mainly, we proved the above mentioned equivalent characterization of lcK manifolds \textit{via} their universal cover also holds for lcK spaces: 

\begin{theorem*}
	Let X be a complex space. Then X admits an lcK metric if and only if its universal covering $\tilde{X}$ admits a K\" ahler metric such that the deck automorphisms act on $\tilde{X}$ by positive homothethies of the K\" ahler metric.
\end{theorem*}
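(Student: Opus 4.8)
The plan is to establish both implications by the classical conformal-rescaling argument, but phrased throughout in terms of local K\"ahler potentials so that every step makes sense across the singular locus of $X$. Write $\pi\colon\tilde X\to X$ for the universal covering and $\Gamma=\Deck(\tilde X/X)\cong\pi_1(X)$.

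For the direct implication, suppose $\omega$ is an lcK metric on $X$ with Lee form $\theta$. Then $\theta$ is a closed real $1$-form and, since $\tilde X$ is simply connected, $\pi^*\theta$ is exact, say $\pi^*\theta=d\varphi$ for a smooth function $\varphi$ on $\tilde X$ (here I use that de Rham cohomology computes $H^1(\tilde X,\R)=0$). I would then set $\tilde\omega:=e^{-\varphi}\pi^*\omega$ and compute
\begin{equation*}
d\tilde\omega=-e^{-\varphi}\,d\varphi\wedge\pi^*\omega+e^{-\varphi}\pi^*(d\omega)=e^{-\varphi}\bigl(-\pi^*\theta\wedge\pi^*\omega+\pi^*(\theta\wedge\omega)\bigr)=0,
\end{equation*}
so that $\tilde\omega$ is K\"ahler; locally it equals $(\text{const})\cdot\pi^*\omega_i$, so its potentials are genuine smooth strictly plurisubharmonic functions in the complex-space sense. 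For $\gamma\in\Gamma$ one has $d(\gamma^*\varphi-\varphi)=\gamma^*\pi^*\theta-\pi^*\theta=0$, hence $\gamma^*\varphi-\varphi=c_\gamma$ is constant and $\gamma^*\tilde\omega=e^{-c_\gamma}\tilde\omega$, a positive homothety.

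For the converse, suppose $\tilde\omega$ is a K\"ahler metric on $\tilde X$ with $\gamma^*\tilde\omega=\rho(\gamma)\,\tilde\omega$, $\rho(\gamma)>0$, defining a character $\rho\colon\Gamma\to\R_{>0}$. I would first manufacture a positive function $v$ on $\tilde X$ satisfying the automorphy relation $\gamma^*v=\rho(\gamma)\,v$. To do so, fix any global Hermitian metric $\omega_0$ on $X$ (these always exist by embedding locally and patching with a partition of unity) and pull it back to the $\Gamma$-invariant form $\pi^*\omega_0$; on the regular locus set $v:=\bigl(\tilde\omega^{\,n}/(\pi^*\omega_0)^{n}\bigr)^{1/n}$ with $n=\dim X$, which satisfies $\gamma^*v=\rho(\gamma)\,v$ because $\gamma^*\tilde\omega^{\,n}=\rho(\gamma)^n\tilde\omega^{\,n}$. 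Then $v^{-1}\tilde\omega$ is $\Gamma$-invariant and descends to a form $\omega$ on $X$; since $d(v^{-1}\tilde\omega)=-d(\log v)\wedge v^{-1}\tilde\omega$ and $d(\log v)$ is $\Gamma$-invariant (the constants $\log\rho(\gamma)$ drop out under $d$), the form $-d(\log v)$ descends to a closed $1$-form $\theta$ on $X$ with $d\omega=\theta\wedge\omega$, so that $\omega$ is lcK.

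The main obstacle is entirely at the singularities. The rescaling function $v$ and the primitive $\varphi$ are produced on the regular part $X_{\reg}$, and one must check that the resulting objects are honest K\"ahler, resp.\ lcK, metrics on all of $X$ in the adopted complex-space sense\,---\,that is, locally given by smooth strictly plurisubharmonic potentials pulled back from a local embedding, and not merely smooth data on $X_{\reg}$. Concretely, I expect to verify that $v$ extends across $\Sing X$ with the required regularity and positivity, and that the descended potentials remain strictly plurisubharmonic after the conformal twist; the availability of global Hermitian metrics together with the extension properties of (pluri)subharmonic functions across analytic sets are the tools I would lean on here. Making these local-potential verifications uniform across the singular locus, rather than the formal rescaling computation, is where the real work lies.
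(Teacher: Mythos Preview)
This theorem is not proved in the present paper; it is quoted from the authors' earlier article \cite{PS}. What the paper \emph{does} reprove here is the single key ingredient identified as such: that on a simply connected space every topologically closed $1$-form is exact (the new argument is the \v Cech-cohomology proof of \cite[Corollary~2.10]{PS} in Section~\ref{sec:preliminaries}). So there is no full proof to compare against, only this lemma.

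Your forward implication is the right outline and hinges on exactly that lemma, but your justification is incorrect in the singular setting. The Lee form $\theta$ is not a differential form; in the paper's framework it is a \emph{topologically closed $1$-form} (\ref{def:1-form}), i.e.\ an equivalence class of local smooth functions with locally constant differences, and the equation $d\omega=\theta\wedge\omega$ has no literal meaning on $X$. Invoking ``de Rham cohomology computes $H^1(\tilde X,\R)=0$'' is precisely what fails on singular spaces; the exactness of $\pi^*\theta$ must come from the \v Cech argument of Section~\ref{sec:preliminaries} (or the path-integration argument of \cite{PS}). Once one has a global smooth $F$ on $\tilde X$ with $F-\pi^*f_\alpha$ locally constant, the verification that $(\pi^{-1}(U_\alpha),\,e^{-c_\alpha}\pi^*\varphi_\alpha)$ (with $c_\alpha=F-\pi^*f_\alpha$) defines a K\"ahler metric in the sense of \ref{def:Kahler}, and that deck maps act by homotheties, is a routine check with potentials---but it has to be carried out at that level, not with global $2$-forms.

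Your converse has a genuine gap. The ratio $v=\bigl(\tilde\omega^{\,n}/(\pi^*\omega_0)^n\bigr)^{1/n}$ is defined only on $\tilde X_{\reg}$, and there is no reason for it to extend smoothly (or even continuously and positively) across $\Sing\tilde X$: the top powers are determinants of Hessians of ambient extensions, and their quotient can degenerate. You correctly flag this as ``where the real work lies,'' but you have not done that work, and the appeal to ``extension properties of (pluri)subharmonic functions'' does not apply---$v$ is neither psh nor a potential. A cleaner route, and the one compatible with the paper's framework, avoids volume forms entirely: the character $\log\rho\colon\Gamma\to\R$ is a \v Cech $1$-cocycle with values in $\underline{\R}$; since the sheaf of smooth functions is soft, one finds smooth $f_\alpha$ on a cover $(U_\alpha)$ of $X$ with $f_\alpha-f_\beta$ the prescribed constants, hence a smooth $F$ on $\tilde X$ with $\gamma^*F-F=\log\rho(\gamma)$. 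Then $(U_\alpha,f_\alpha)$ is the Lee form, and one checks directly with the local K\"ahler potentials of $\tilde\omega$ that the descended data satisfy \ref{def:LCK}. This is presumably what \cite{PS} does; your volume-form construction is an unnecessary detour that introduces an extension problem you cannot resolve.
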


 We used this to prove a result about ramified coverings of lcK spaces with discrete fibers. 

\vspace{5pt}

A fundamental result in smooth lcK geometry is Vaisman's Theorem \cite{vasiman1980}, stating the dichotomy between \textit{pure lcK} and \textit{\K} on compact complex manifolds. A main ingredient in any proof of this theorem is either the $\del \delb$-Lemma or Hodge decomposition. Hence, there is no straightforward way of adapting the result to the singular setting, as neither of these fundamental theorems in complex geometry exist in the non-smooth case. Moreover, the very existence of appropriate $\del$ and $\delb$ operators is not clear (see \cite{mich} for a comparison of constructions of differential forms on normal varieties).

\vspace{3pt}

In this paper, we go further in developing the theory of lcK spaces, and show that Vaisman's theorem does hold true under reasonable assumptions when allowing the space to have singularities. This is done by carefully passing to the desingularization and applying a stronger variant of Vaisman's theorem due to \cite{APV}, which allows the lcK form $\omega$ to be degenerate along a subspace of positive codimension. 

More precisely, we prove:

\begin{reptheorem}{vaisman-singular}
		Let $(X,\omega,\theta)$ be a compact, locally irreducible, lcK space. If $X$ admits a K\"ahler metric, then $(X,\omega,\theta)$ is gcK.
\end{reptheorem}

Moreover, we present \ref{example-local-red}, which shows that this additional assumption of local irreducibility cannot be dropped. 

\vspace{5pt}

A key instrument in our work is the blow-up of a complex space with singularities, for which two equivalent definitions prove to be useful in different stages of the proof: the blow-up as the closure of a graph, and the blow-up \textit{via} global analytic $\Proj$. The crucial fact we use repeatedly, that the blow-up of a compact \K \ space is itself \K, is well known (see \cite{var86}, \cite{fuji78}), but as we could not find a self-contained proof in the literature, we also provided one before passing to our main result.

\newpage

The paper is organized as follows:

In Section \ref{sec:preliminaries}, we give all the definitions needed when working with lcK spaces. We also present, using \v Cech cohomology, a simplified proof of \cite[Corollary 2.10]{PS}, the main ingredient of that paper, which we previously proved using elementary properties of integration along curves. We then collect some known results about blow-ups of complex spaces, which can be found in \cite{peternell}.

In Section \ref{sec:blowups-kahler}, we adapt Fujiki's ideas \cite[Lemma 2]{fuji75} to give \ref{blow-up-Kahler}, a self-contained proof of the fact that blow-ups of K\" ahler spaces are also K\" ahler.

Finally, in Section \ref{sec:vaisman-singular}, we prove \ref{vaisman-singular}, which is the main result of this article. Then, we give \ref{example-local-red} which shows that the additional condition of local irreducibility is essential.

\section{Preliminaries}\label{sec:preliminaries}

We begin this section by giving the definitions of the main objects we will be studying, which we introduced in this form in \cite{PS}. 

\subsection{Locally conformally K\" ahler spaces}
For convenience, throughout this article, \textit{psh} stands for \textit{plurisubharmonic}. 

\begin{definition}
	Consider $X$ to be a complex space and $\varphi:X\to\mathbb{R}$. Then, $\varphi$ is said to be smooth if for every $x\in X$, there exists an open neighborhood $U\ni x$ which can be embedded as an analytic subset of an open subset $D$ of some Euclidean space, $U\subset D\hookrightarrow \mathbb{C}^N$, and a smooth function $\tilde{\varphi}:D\to \mathbb{R}$ such that $\tilde{\varphi}_{|U}=\varphi_{|U}$. We have the same definition for \textit{psh} instead of \textit{smooth}. 
\end{definition}

\begin{remark}
	There may exist a smooth and psh function $\varphi:X\to\mathbb{R}$ which cannot be locally realized as the restriction of a function $\tilde{\varphi}$ on a local embedding of $X$ which is at the same time smooth and psh.
\end{remark}

\vspace{5pt}

For the first definition in this subsection, we only require $X$ be a topological space.

\begin{definition}
	\label{def:1-form}
	\begin{enumerate}[1)]
		\item Denote $
				\widetilde{Z}^1(X) = \{ (U_\alpha, f_\alpha)_{\alpha \in A}\}$, where  $(U_\alpha)_{\alpha\in A}$ is an open covering of $X$, $f_\alpha : U_\alpha \to \mathbb{C}$ continuous, and $f_\alpha - f_\beta$ locally constant on $U_\alpha \cap U_\beta, \forall \alpha, \beta \in A$.	
		We define an equivalence relation on $\widetilde{Z}^1(X)$ by 
		\[
		(U_\alpha, f_\alpha)_{\alpha \in A} \sim (V_\beta, h_\beta)_{\beta \in B} \iff (U_\alpha, f_\alpha)_{\alpha \in A} \cup (V_\beta, h_\beta)_{\beta \in B} \in \widetilde{Z}^1(X).
		\]
		
		\item We define the space of \textit{topologically closed 1-forms} on $X$ to be the quotient space $Z^1(X) = \widetilde{Z}^1(X)/\sim$. An element $\eta \in Z^1(X)$ is called a \textit{topologically closed 1-form}. 
		
		\item An element $\theta \in Z^1(X)$ is called \textit{exact} if $\eta = \widehat{(X, f)}$ for a continuous $f:X \to \mathbb{C}$. In this case, we make the notation $\eta = df$.
	\end{enumerate}
\end{definition}

\begin{remark}
	\label{rem:1-formSing}
	Closed 1-forms on smooth manifolds are topologically closed 1-forms.
\end{remark}

\begin{remark}
	\label{rem:1-formCech}
	The set of topologically closed $1$-forms can be equivalently described in terms of \v Cech cohomology:
	\[
	Z^1(X) = \check{\mathrm{H}}^0\left(X,\faktor{\mathscr{C}}{\underline{\R}}\right),
	\]
	where $\mathscr{C}$ is the sheaf of continuous functions on $X$ and $\underline{\R}$ is the locally constant sheaf.
\end{remark}

\vspace{3pt}

The last remark allows for a much shorter proof of a critical result in our previous paper, which we include below:

\begin{theorem}\textnormal{(\cite[Corollary 2.10]{PS})}
	If $X$ is simply connected, then every topologically closed $1$-form is exact.
	
\end{theorem}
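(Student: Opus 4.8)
The plan is to exploit the \v Cech-cohomological description of $Z^1(X)$ recorded in Remark~\ref{rem:1-formCech} and to reduce exactness to the vanishing of a single class in $\check{\mathrm{H}}^1(X,\underline{\C})$. Concretely, I would start from the short exact sequence of sheaves on $X$
\[
0 \longrightarrow \underline{\C} \longrightarrow \mathscr{C} \longrightarrow \faktor{\mathscr{C}}{\underline{\C}} \longrightarrow 0,
\]
where $\mathscr{C}$ is the sheaf of continuous $\C$-valued functions and $\underline{\C}$ the locally constant sheaf, and pass to the induced long exact sequence in \v Cech cohomology,
\[
\check{\mathrm{H}}^0(X,\mathscr{C}) \xrightarrow{\ \rho\ } \check{\mathrm{H}}^0\!\left(X,\faktor{\mathscr{C}}{\underline{\C}}\right) \xrightarrow{\ \delta\ } \check{\mathrm{H}}^1(X,\underline{\C}).
\]
By Remark~\ref{rem:1-formCech} the middle term is exactly $Z^1(X)$, while the image of $\rho$ is by definition the set of exact $1$-forms: a class arising from a global continuous function $f:X\to\C$ is precisely $\widehat{(X,f)}=df$. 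Exactness of the sequence then gives at once that a topologically closed $1$-form $\eta$ is exact if and only if $\delta(\eta)=0$.

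It is worth writing $\delta$ out explicitly, since this clarifies the mechanism and avoids citing any heavy machinery. Represent $\eta$ by $(U_\alpha,f_\alpha)_\alpha$ with $f_\alpha$ continuous and $c_{\alpha\beta}:=f_\alpha-f_\beta$ locally constant on $U_\alpha\cap U_\beta$. Then $(c_{\alpha\beta})$ is automatically a \v Cech $1$-cocycle valued in $\underline{\C}$, the cocycle condition being the telescoping identity $c_{\alpha\beta}+c_{\beta\gamma}+c_{\gamma\alpha}=0$, and $\delta(\eta)=[(c_{\alpha\beta})]$. Now $\eta$ is exact exactly when $(c_{\alpha\beta})$ is a coboundary: if $c_{\alpha\beta}=g_\alpha-g_\beta$ with each $g_\alpha$ locally constant, then the functions $f_\alpha-g_\alpha$ agree on overlaps, glue to a global continuous $f$, and satisfy $f-f_\alpha=-g_\alpha$ locally constant, so $\eta=df$; conversely an exact $\eta$ produces such $g_\alpha$. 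This is simply the identity $\ker(\delta)=\im(\rho)$ carried out by hand.

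Finally I would invoke the hypothesis. Since $X$ is simply connected, $\check{\mathrm{H}}^1(X,\underline{\C})$ vanishes, so $\delta$ is the zero map and every $\eta\in Z^1(X)$ is exact. The one external input is this vanishing: for locally contractible spaces---in particular complex spaces---\v Cech cohomology with constant coefficients agrees with singular cohomology, and $H^1(X;\C)\cong\Hom\!\big(\pi_1(X),\C\big)=0$ once $\pi_1(X)=0$. I expect this last point to be the only genuine subtlety, and the place where one must be honest about the standing assumptions: for an utterly arbitrary topological space simple connectivity does not force $\check{\mathrm{H}}^1(X,\underline{\C})=0$ (the Warsaw circle being the standard counterexample), so the argument quietly uses the mild local regularity of the spaces under consideration. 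Everything else is a formal consequence of the long exact sequence.
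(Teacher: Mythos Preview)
Your argument is correct and follows essentially the same route as the paper: the short exact sequence of sheaves $0\to\underline{\C}\to\mathscr{C}\to\mathscr{C}/\underline{\C}\to 0$ (the paper uses $\underline{\R}$, a cosmetic difference), the associated long exact sequence, and the vanishing of $\check{\mathrm{H}}^1$ with locally constant coefficients under simple connectivity. Your explicit unpacking of the connecting map and your remark about the implicit local-contractibility hypothesis are welcome additions that the paper leaves tacit.
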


\begin{proof}
	Consider the short exact sequence
	\[
	0 \to \underline{\R} \to \mathscr{C} \to \faktor{\mathscr{C}}{\underline{\R}} \to 0
	\]
	
	We then pass to the long exact sequence in cohomology:
	\[
	0 \to H^0(X, \underline{\R}) \to H^0(X, \mathscr{C}) \to H^0(X, \faktor{\mathscr{C}}{\underline{\R}}) \to H^1(X, \underline{\R}) \to ...
	\]
	
	Let $\eta$ be a topologically closed $1$-form. By \ref{rem:1-formCech}, $\eta \in \check{\mathrm{H}}^0\left(X,\faktor{\mathscr{C}}{\underline{\R}}\right)$. Moreover, as $X$ is simply connected, $H^1(X, \underline{\R}) = 0$. We then obtain immediately that $\eta$ is exact.
\end{proof}

\vspace{5pt}

Let $X$ be a connected complex space of dimension $n$.  The definition of \textit{K\" ahler space} we use is the one given by Grauert \cite{grauert}:

\begin{definition}\label{def:Kahler}
	\begin{enumerate}[1)]
		\item Denote 
				$\widetilde{\mathcal{K}}(X) = \{  (U_\alpha, \varphi_\alpha)_{\alpha \in A} \}$, where $(U_\alpha)_\alpha$ is an open cover of $X$,  $\varphi_\alpha : U_\alpha \to \mathbb{R}$ is smooth and strongly psh, and $\ei \del \delb (\varphi_\alpha - \varphi_\beta) = 0$ on $U_\alpha \cap U_\beta \cap X_{\reg}$,  for any $\alpha,\beta\in A$.
		We define an equivalence relation on $\widetilde{\mathcal{K}}(X)$ by 
		\[
		(U_\alpha, \varphi_\alpha)_{\alpha \in A} \sim (V_\beta, \psi_\beta)_{\beta \in B} \iff (U_\alpha, \varphi_\alpha)_{\alpha \in A} \cup (V_\beta, \psi_\beta)_{\beta \in B} \in \widetilde{\mathcal{K}}(X).
		\]
		
		\item We define the space of \textit{K\"ahler forms} on $X$ to be the quotient space $\mathcal{K}(X) = \widetilde{\mathcal{K}}(X)/\sim$. An element $\omega \in \mathcal{K}(X)$ is called a \textit{K\"ahler form} (or \textit{K\"ahler metric}).
		
		\item $(X, \omega)$ is called a \textit{K\"ahler space}. 
	\end{enumerate}	
\end{definition}

\vspace{5pt}

Adapting the most convenient definition of lcK manifolds and using the same idea as in the definition of K\" ahler spaces, we have the following definition for \textit{lcK spaces}. For technical reasons imposed by working with blow-ups, on which we shall construct metrics which satisfy the K\" ahler condition, except for the fact that they are degenerate on the exceptional divisor, we also introduce \textit{weakly locally conformally K\" ahler} (wlcK) forms.

\begin{definition}
	\label{def:LCK}
	\begin{enumerate}[1)]
		\item Denote 
		
		\hspace{10pt} $\widetilde{lc\mathcal{K}}(X) = \{  (U_\alpha, \varphi_\alpha, f_\alpha)_{\alpha \in A} \}$, where $(U_\alpha)_\alpha$ is an open cover of $X$, $\varphi_\alpha : U_\alpha \to \mathbb{R}$ is smooth and strongly psh, $f_\alpha: U_\alpha \to \mathbb{R}$ is smooth, and 
		$$e^{f_\alpha} \ei \del \delb \varphi_\alpha = e^{f_\beta} \ei \del \delb \varphi_\beta$$ 
		on $U_\alpha \cap U_\beta \cap X_{\reg}$,  for any $\alpha,\beta\in A$.

\vspace{5pt}

			\hspace{10pt} $\widetilde{wlc\mathcal{K}}(X) = \{  (U_\alpha, \varphi_\alpha, f_\alpha)_{\alpha \in A}\}$, where $(U_\alpha)_\alpha$ is an open cover of $X$, $\varphi_\alpha : U_\alpha \to \mathbb{R}$ is smooth and psh, and strongly psh outside an analytic set of positive codimension, and $f_\alpha: U_\alpha \to \mathbb{R}$ is smooth and 
			$$e^{f_\alpha} \ei \del \delb \varphi_\alpha = e^{f_\beta} \ei \del \delb \varphi_\beta$$
			on $U_\alpha \cap U_\beta \cap X_{\reg}$, for any $\alpha,\beta\in A$.

\vspace{5pt}

		We define equivalence relations on $\widetilde{lc\mathcal{K}}(X)$ and $\widetilde{wlc\mathcal{K}}(X)$ by 
		\begin{equation*}
			\begin{split}
				(U_\alpha, \varphi_\alpha, f_\alpha)_{\alpha \in A} \sim (V_\beta, \psi_\beta, h_\beta)_{\beta \in B} \iff \\ (U_\alpha, \varphi_\alpha, f_\alpha)_{\alpha \in A} \cup (V_\beta, \psi_\beta, h_\beta)_{\beta \in B} \in \widetilde{lc\mathcal{K}}(X).
			\end{split}
		\end{equation*}
		and similarly,
		\begin{equation*}
			\begin{split}
				(U_\alpha, \varphi_\alpha, f_\alpha)_{\alpha \in A} \sim (V_\beta, \psi_\beta, h_\beta)_{\beta \in B} \iff \\ (U_\alpha, \varphi_\alpha, f_\alpha)_{\alpha \in A} \cup (V_\beta, \psi_\beta, h_\beta)_{\beta \in B} \in \widetilde{wlc\mathcal{K}}(X).
			\end{split}
		\end{equation*}
		
		\item We define the space of \textit{locally conformally K\"ahler (lcK) forms} (or \textit{metrics}) on $X$ to be the quotient space $lc\mathcal{K}(X) =  \widetilde{lc\mathcal{K}}(X)/\sim$ and the space of \textit{weakly locally conformally K\"ahler (wlcK) forms} (or \textit{metrics}) on $X$ to be the quotient space $wlc\mathcal{K}(X) = \widetilde{wlc\mathcal{K}}(X)/\sim$. An element $\omega \in (w)lc\mathcal{K}(X)$ is called an (w)lcK metric. Obviously, $lc\mathcal{K}(X) \subset wlc\mathcal{K}(X)$.
		
		\item The covering $(U_\alpha)_\alpha$ together with the functions $f_\alpha$ give rise to a topologically closed $1$-form $\theta$ on $X$. We call $\theta$ the \textit{Lee form} associated to the (w)lcK metric.
		
		\item If the Lee form $\theta$ is exact, the metric $\omega\in (w)lc\mathcal{K}(X)$ is called \textit{(weakly) globally conformally K\" ahler} (\textit{(w)gcK} for short).
		
		\item $(X, \omega, \theta)$ is called a \textit{(w)lcK space}. If $\theta$ is exact, $(X, \omega, \theta)$ is called a \textit{(w)gcK space}.
	\end{enumerate}	
\end{definition}

\vspace{5pt}

\subsection{Blow-ups of complex spaces}
\label{ssec:blup}

In the next paragraphs we describe two equivalent methods to construct the blow-up of a complex space along a closed subspace, which can be found in \cite{peternell}. In the results that follow, we shall make use of both of them alternatively, according to which is the most practical.

\begin{definition} A subspace $Z$ of a complex space $X$ is called a \textit{Cartier divisor} in $X$ if for every $a\in Z$, $Z$ can be defined locally near $a$ by a single equation $h=0$, where $h\in\mathcal{O}_{X,a}$ is a non-zero divisor.
\end{definition}

\begin{remark}
	A Cartier divisor is a hypersurface in $X$, but not conversely, and its complement $X \setminus Z$ is dense in $X$.
\end{remark}
\vspace{5pt}

The first of the two equivalent definitions of blow-up is the more analytic one, given \textit{via} closure of a graph.

\begin{definition}\label{blow-up graph}(Blow-up as closure of graph) 
	Let $X$ be a complex analytic space and $Z=V(I)$ a closed subspace of $X$ defined by an ideal $I$ of $\mathcal{O}(X)$, generated by elements $g_0,g_1\ldots,g_k$. The morphism 
	\[
	\gamma:X\setminus Z \to \mathbb{P}^{k}, a\mapsto [g_0(a):g_1(a):\ldots:g_k(a)]
	\]
	is well defined. The closure $\widetilde{X}$ of the graph $\Gamma$ of $\gamma$ inside $X\times \mathbb{P}^{k}$ together with the restriction $\pi:\widetilde{X}\to X$ of the projection $X\times\mathbb{P}^{k}\to X$ is the blow-up of $X$ along $Z$. 
	It does not depend, up to isomorphism over $X$, on the choice of the generators $g_i$ of $I$. Hence, by gluing we may construct the blow-up along arbitrary closed subspaces. 
	$\pi^{-1}(Z)$ is a Cartier divisor, in particular a hypersurface, and is called the exceptional divisor or exceptional locus of the blow-up. $Z$ is called the center of the blow-up.
\end{definition}

\vspace{5pt}

The second definition is a more algebraic construction of the blow-up \textit{via} global analytic $\Proj$, denoted $\Proj_{\an}$. 

\vspace{5pt}

\begin{definition}\label{Blow-up as Proj}(Blow-up as $\Proj_{\an}$) 
	Let $X$ be a complex space and $Z\subset X$ a closed subspace. Let $\mathscr{I}\subset \mathcal{O}_X$ be the ideal sheaf of $Z$ in $X$ and denote by $\mathscr{A}=\bigoplus_{m=0}^{\infty}\mathscr{I}^m$ the graded $\mathcal{O}_X$-algebra, such that for an open subset $U\subset X$, the elements of $\mathscr{A}_m(U)=\mathscr{I}(U)^m$ have degree $m$. Since $\mathscr{I}$ is coherent, it can be shown that $\mathscr{A}$ is of finite presentation, \textit{i.e.} locally, we have the isomorphism $\mathscr{A}=\mathcal{O}_U[t_0,t_1,\ldots,t_n]/\mathscr{J}$, with a finitely generated homogenous ideal $\mathscr{J}$ in $\mathcal{O}_U[t_0,t_1,\ldots,t_n]$.
	
	Let $\Proj_{\an}(\mathscr{A}_{|U})\subset U\times \mathbb{P}^n$ be the subspace defined by $\mathscr{J}$ and consider the projection on the first component $\pi_{|U}:\Proj_{\an}(\mathscr{A}_{|U}))\subset U\times \mathbb{P}^n\to U$. The blow-up of $X$ with center $Z$ is the space $\widetilde{X}=\Proj_{\an}(\mathscr{A})$, obtained by glueing together these local pieces, together with the projection $\pi:\widetilde{X}\to X$.
	
	With this construction, there exists a canonical line bundle (which can be seen as an invertible sheaf) $\mathcal{O}(1)$ on $\widetilde{X}$, such that on $\pi^{-1}(U)\subset U\times\mathbb{P}^n$, we have $\mathcal{O}(1)=i^*p_2^*(\mathcal{O}_{\mathbb{P}^n}(1))$, where $i:\pi^{-1}(U)\to U\times\mathbb{P}^n$ is the inclusion, $p_2$ is the projection on the second factor in $U\times \mathbb{P}^n$, and $\mathcal{O}_{\mathbb{P}^n}(1)$ is the canonical line bundle of $\mathbb{P}^n$. 
\end{definition}

\section{Blow-ups of K\"ahler spaces}\label{sec:blowups-kahler}

In this section, we give a self-contained proof that the blow-up of a compact K\" ahler space along a closed subspace admits a K\" ahler metric. The main ideas of this proof come from \cite[Lemma 2]{fuji75}, and for most elements of the proof, we keep Fujiki's notations.

\begin{theorem}\label{blow-up-Kahler}
	Let $X$ be a compact K\"aher space and $Z\subset X$ a closed subspace of positive codimension. Then, the blow-up $\pi:\widetilde{X}\to X$ of $X$ along $Z$ admits a K\"ahler metric.
\end{theorem}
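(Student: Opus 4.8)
The plan is to realise a K\"ahler form on $\widetilde X$, in the sense of \ref{def:Kahler}, as a combination of the pullback of the given K\"ahler form on $X$ and a form coming from the line bundle $\mathcal{O}(1)$ of \ref{Blow-up as Proj}. I would work throughout with the $\Proj_{\an}$ description of the blow-up, so that $\mathcal{O}(1)$ and its local identification $\mathcal{O}(1)=i^*p_2^*\mathcal{O}_{\mathbb{P}^n}(1)$ over each chart are available. Let $\omega\in\mathcal{K}(X)$ be given by local strongly psh potentials $(U_\alpha,\varphi_\alpha)$, and let $E=\pi^{-1}(Z)$ be the exceptional divisor; since $\codim Z\geq 1$, the map $\pi$ restricts to a biholomorphism $\widetilde X\setminus E\to X\setminus Z$. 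Then the functions $\varphi_\alpha\circ\pi$ are smooth and psh on $\pi^{-1}(U_\alpha)$, with pluriharmonic differences on overlaps; they represent the semipositive pullback $\pi^*\omega$, which is strictly positive in the directions transverse to the fibres of $\pi$ and degenerates exactly along $\ker d\pi$.

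For the second ingredient I would equip $\mathcal{O}(1)$ with a global Hermitian metric $h$, built by patching the local Fubini--Study metrics (coming from $\mathcal{O}(1)=i^*p_2^*\mathcal{O}_{\mathbb{P}^n}(1)$) by means of a partition of unity $\{\chi_\gamma\}$ subordinate to the cover $(U_\alpha)$ of $X$. Writing $h$ in a local holomorphic frame as $e^{-\phi_\alpha}$, the curvature form $\eta$ of $h$ is given locally by $\ei\del\delb\phi_\alpha$, and since the frames differ by holomorphic nonvanishing transition functions, the differences $\phi_\alpha-\phi_\beta$ are pluriharmonic. I would then set $\Psi_\alpha=\varphi_\alpha\circ\pi+\epsilon\,\phi_\alpha$ for a constant $\epsilon>0$ to be fixed. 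On overlaps $\Psi_\alpha-\Psi_\beta$ is a sum of two pluriharmonic functions, hence pluriharmonic, so $(\pi^{-1}(U_\alpha),\Psi_\alpha)$ is a candidate element of $\widetilde{\mathcal{K}}(\widetilde X)$ whose associated form is $\pi^*\omega+\epsilon\eta$; it remains to check that each $\Psi_\alpha$ is strongly psh, i.e. that $\pi^*\omega+\epsilon\eta$ is positive definite on $\widetilde X$.

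The crucial point is that $\eta$, although not semipositive globally, is strictly positive along the fibres of $\pi$: the cutoff functions $\chi_\gamma$ are pulled back from $X$, hence constant along the fibres, so the restriction of $\eta$ to a fibre is a genuine convex combination of (pullbacks of) Fubini--Study forms and is therefore positive. In particular $\eta$ is positive definite on $\ker d\pi=\ker(\pi^*\omega)$; this is exactly the relative ampleness of $\mathcal{O}(1)$. The positivity of the sum then follows from a pointwise linear-algebra lemma: if $A\geq 0$ is a semipositive Hermitian form with null space $V$ and $B$ is Hermitian with $B|_V>0$, then $A+\epsilon B>0$ for all sufficiently small $\epsilon>0$. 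Applying this with $A=\pi^*\omega$ and $B=\eta$, and using that $\widetilde X$ is compact so that the relevant bounds are uniform by continuity, I would fix a single $\epsilon>0$ making $\pi^*\omega+\epsilon\eta$ positive definite everywhere. Hence all $\Psi_\alpha$ are strongly psh and the family $(\pi^{-1}(U_\alpha),\Psi_\alpha)$ defines a K\"ahler form on $\widetilde X$.

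The main obstacle is this last positivity step carried out honestly on the singular space. The smallness of $\epsilon$ is genuinely needed, because the partition of unity that makes $h$ global destroys semipositivity of $\eta$ away from the fibres (indeed $\mathcal{O}(-E)$ is not nef in general), so one cannot dispense with the compactness estimate. More seriously, positivity and strong plurisubharmonicity must be verified in Grauert's sense through local embeddings: near a point of $E$ lying over a singular point of $X$ one must produce a strictly psh ambient extension of $\Psi_\alpha$ on an embedding $\widetilde X\hookrightarrow D\times\mathbb{P}^n$, control the mixed base/fibre terms introduced by the cutoffs, and check that $\epsilon$ can be chosen uniformly. Verifying that the pointwise positivity on the Zariski tangent spaces upgrades to the required ambient strong plurisubharmonicity, uniformly over the compact $\widetilde X$, is where I expect the real work to lie.
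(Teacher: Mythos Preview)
Your proposal is correct and follows essentially the same strategy as the paper (which in turn adapts Fujiki): pull back the K\"ahler form, add a small multiple of the curvature of a Hermitian metric on $\mathcal{O}(1)$ that is positive along the fibres of $\pi$, and use compactness. The difference is only in how that metric on $\mathcal{O}(1)$ is produced. You patch the local Fubini--Study metrics by a partition of unity pulled back from $X$; the paper instead uses coherence of $\pi_*\mathcal{O}(1)$ and a partition of unity on $X$ to build finitely many global $C^\infty$ sections $\varphi_1,\dots,\varphi_r$ of $\mathcal{O}(1)$ which are \emph{holomorphic along each fibre} and have no common zeros, then sets $h_\sigma=\big(\sum_j|\varphi_{j,\sigma}|^2\big)^{-1}$ in a local frame.

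The payoff of the paper's choice is exactly at the obstacle you flagged. Since the $\varphi_{j,\sigma}$ are fibrewise holomorphic, $-\log h_\sigma=\log\sum_j|\varphi_{j,\sigma}|^2$ has, in the ambient fibre coordinates $(w_i^\sigma)$ of $U_\alpha'\times G_\sigma$, precisely the Fubini--Study Hessian, hence is strictly psh along the fibre by a direct computation on the ambient space. This gives the Grauert-sense strong plurisubharmonicity with an explicit ambient extension, so the final step $\pi^*\psi_\eta-c\log h_\sigma$ strongly psh for small $c>0$ becomes a straightforward compactness argument. Your route reaches the same conclusion, but you should be explicit that ``patching'' means taking the weighted geometric mean of the local metrics (equivalently $\phi=\sum_\gamma(\chi_\gamma\circ\pi)\,\phi^{(\gamma)}$ on potentials), so that restriction to a fibre genuinely yields a convex combination of the local curvature forms; an additive patching of the metrics themselves does not give this.
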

\begin{proof}
	For this proof, we use the algebraic \ref{Blow-up as Proj} of blow-up.
	
	Let $\mathscr{A}_X$ (resp. $\mathscr{A}_{\widetilde{X}}$) be the sheaf of differentiable functions on $X$ (resp. $\widetilde{X}$); then $\mathscr{A}_X\otimes_{\mathcal{O}_X}\pi_*(\mathcal{O}(1))$ is a sheaf of $\mathscr{A}_X$-modules. Since $\mathcal{O}(1)$ is a coherent sheaf of $\mathcal{O}_{\widetilde{X}}$-modules and $\pi$ is a proper mapping, $\pi_*(\mathcal{O}(1))$ is also coherent, hence locally finitely generated. Consider an open covering $\mathscr{U}=\{U_\alpha\}_{\alpha \in A}$ of $X$ such that for every $\alpha\in A$, there are finitely many sections $s_{\alpha,j}\in \pi_*(\mathcal{O}(1))(U_\alpha)$, $j=1,\ldots,n_\alpha$, which generate $\pi_*(\mathcal{O}(1))_x$ for any $x\in U_\alpha$. Since $X$ is compact, we may assume that the covering $\{U_\alpha\}_{\alpha \in A}$ is finite. $X$ is paracompact, thus there exists a partition of unity $\{\chi_\alpha\}_{\alpha \in A}$ subordinate to the covering $\{U_\alpha\}_{\alpha \in A}$. Then, 
	$$\Phi_0:=\{\chi_\alpha\otimes s_{\alpha,j}\}_{\alpha,j}\subset H^0(X,\mathscr{A}_X\otimes_{\mathcal{O}_X}\pi_*(\mathcal{O}(1)))$$
	is a finite set of sections which generate the stalk of $\mathscr{A}_X\otimes_{\mathcal{O}_X}\pi_*(\mathcal{O}(1))$ at every point of $X$. By taking the pull-back of these sections to $\widetilde{X}$, we obtain a finite set $\Phi\subset H^0(\widetilde{X}, \mathscr{A}_{\widetilde{X}}\otimes_{\mathcal{O}_{\widetilde{X}}}\mathcal{O}(1))$ of sections which are holomorphic along each fiber of $\pi$ and which have no base points on $\widetilde{X}$ (\textit{i.e.} for any $y\in \widetilde{X}$, there exists $\varphi\in\Phi$ such that $\varphi(y)\not =0$). To simplify the notations, we designate the sections of $\Phi$ by $\{\varphi_1,\varphi_2,\ldots,\varphi_r\}$.
	
	Assume that $\mathcal{O}(1)$, as a line bundle, is defined by the transition functions $\{f_{\sigma\mu}\}$ with respect to the covering $\mathscr{V}=\{V_\sigma\}_{\sigma\in S}$ of $\widetilde{X}$. Every section $\varphi_j$ can be expressed as a family of differentiable functions $\{\varphi_{j,\sigma}:V_\sigma\to\mathbb{R}\}$ such that $\varphi_{j,\sigma}=f_{\sigma\mu}\varphi_{j,\mu}$ on $V_\sigma\cap V_{\mu}$.
	Consider the functions $h_\sigma:V_\sigma\to\mathbb{R}$ defined by
	\[
	h_\sigma= \frac{1}{\sum_{j=0}^{r}|\varphi_{j,\sigma}|^2}.
	\]
	$h_\sigma$ are positive smooth functions which satisfy $\frac{h_\mu}{h_\sigma}=|f_{\sigma\mu}|^2$ on $V_\sigma\cap V_{\mu}$, hence giving a metric on the line bundle $\mathcal{O}(1)$ on $\widetilde{X}$. 
	
	Now, we may assume that the covering $\mathscr{U}=\{U_\alpha\}_{\alpha \in A}$ of $X$ was chosen such that for every $\alpha$, $\pi^{-1}(U_\alpha)$ is embedded as a closed subspace of $U_\alpha\times \mathbb{P}^n$ for an integer $n$ which depends on $\alpha$. We may also assume that the covering $\mathscr{V}=\{V_\sigma\}_{\sigma\in S}$ of $\widetilde{X}$ was chosen such that each $V_\sigma$ is embedded as a closed subspace of $U_\alpha^\prime\times G_\sigma$ for some $\alpha$, where $U_\alpha^\prime$ is a subdomain of $U_\alpha$, and $G_\sigma$ is a subdomain of $\mathbb{P}^n$ which has the coordinates $(w_i^\sigma)$. Then, $h_\sigma$ is the restriction of a $\mathcal{C}^\infty$-smooth function $\tilde{h}_\sigma:U_\alpha^\prime\times G_\sigma\to\mathbb{R}$, and the Hessian 
	\[
	-\frac{\partial^2 \log \tilde{h}_\sigma}{\partial w_i^\sigma \partial \overline{w_j^\sigma}}
	\]
	of $-\log \tilde{h}_\sigma$ with respect to the coordinates $(w_i^\sigma)$ is positive definite at every point of $U_\alpha^\prime\times G_\sigma$. Hence, the restriction to every fiber $\pi^{-1}(x)$ of $-\log h_\sigma$ is strictly psh. 
	
	Moreover, since $\frac{h_\mu}{h_\sigma}=|f_{\sigma\mu}|^2$ on $V_\sigma\cap V_{\mu}$ and $f_{\sigma\mu}$ is holomorphic,
	\[
	\ei\partial \overline{\partial} (-\log h_\sigma)=\ei\partial \overline{\partial} (-\log h_\mu) \text{\ on\ }V_\sigma\cap V_\mu\cap {\widetilde{X}}_{\reg}.
	\] 
	Finally, consider $\omega=(W_\eta,\psi_\eta)_{\eta\in H}$ a K\" ahler metric on $X$ and take its pull-back $\pi^*\omega=(\pi^{-1}(W_\eta), \pi^*\psi_\eta)_{\eta\in H}$. If $c>0$ is a sufficiently small constant, then
	\[
	\Omega=(\pi^{-1}(W_\eta)\cap V_\sigma, \pi^*\psi_\eta-c\log h_\sigma)_{\eta\in H \atop \sigma \in S}
	\]
	is a K\" ahler metric on $\widetilde{X}$.
\end{proof}

\section{Vaisman theorem for lcK spaces}\label{sec:vaisman-singular}

We now come to the main results of this article. First, we prove some facts about blow-ups of locally irreducible complex spaces. Then, we show that if the pull-back of a wlcK metric on the blow-up of a locally irreducible complex space is wgcK, then the initial metric is also wgcK. Next, we prove that Vaisman's theorem on the \textit{pure lcK} vs \textit{K\" ahler} dichotomy still holds for locally irreducible lcK spaces. In the end, we show by an example that the assumption of local irreducibility is essential in our generalization.

In this section, we use only \ref{blow-up graph} of the blow-up.

\begin{lemma}\label{lemma-blowup-ired}
	Let $X$ be a locally irreducible complex space and $Z\subset X$ a closed subspace of positive codimension. Then, the blow-up $\pi:\widetilde{X}\to X$ of $X$ along $Z$ is locally irreducible.
\end{lemma}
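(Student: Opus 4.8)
The plan is to verify that the germ $(\widetilde X,y)$ is irreducible for every $y\in\widetilde X$. Off the exceptional divisor $E:=\pi^{-1}(Z)$ the map $\pi$ is a biholomorphism, so $(\widetilde X,y)\cong (X,\pi(y))$ is irreducible by hypothesis, and the whole content lies at points $y$ over a point $x:=\pi(y)\in Z$. Since both local irreducibility and the formation of the blow-up are local over $X$, I would first shrink $X$ to a small irreducible representative $U$ of the germ $(X,x)$. Then $U\setminus Z$ is connected (a nowhere-dense analytic set cannot disconnect an irreducible space), so the graph $\Gamma\cong U\setminus Z$ is irreducible and $\widetilde X=\overline{\Gamma}$ is irreducible; moreover $\widetilde X\setminus E\cong U\setminus Z$ is locally irreducible and $E$ is a nowhere-dense Cartier divisor.

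Next I would reduce the statement to a connectedness assertion. The number of branches of $(\widetilde X,y)$ equals the number of connected components of $V\cap\widetilde X_{\reg}$ for a sufficiently small connected neighborhood $V$ of $y$. As $E$ is nowhere dense in the manifold $V\cap\widetilde X_{\reg}$, deleting it changes nothing; and since $V\setminus E$ is open in the locally irreducible space $\widetilde X\setminus E$, each of its connected components is irreducible, hence has connected regular locus. Consequently the branches of $(\widetilde X,y)$ are in bijection with the connected components of $V\setminus E$, so it suffices to prove that $V\setminus E$ is connected for arbitrarily small connected $V\ni y$; equivalently, transporting through $\pi\colon\widetilde X\setminus E\xrightarrow{\sim}U\setminus Z$, that $\pi(V)\setminus Z$ is connected.

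The main obstacle is exactly this connectedness at the single point $y$, rather than along the full fibre $\pi^{-1}(x)$ --- it is here that one must genuinely produce local, not merely global, irreducibility. That reducedness of $Z$ is essential can already be seen from the non-reduced centre $V(x^2,y^2)\subset\mathbb{C}^2$, whose blow-up contains the chart $\{t\,x^2=y^2\}$ with two analytic branches at each $(0,0,t_0)$, $t_0\neq 0$; thus over a smooth (hence locally irreducible) base the conclusion can fail without reducedness. To handle the reduced case I would pass to the normalization $\nu\colon X^{\mathrm{nu}}\to X$, which is a homeomorphism because $X$ is locally irreducible, and use the induced morphism of blow-ups together with $\nu$ to transfer the question to a normal base, where the fibres of the proper bimeromorphic $\pi$ are connected by Zariski's connectedness theorem (equivalently, by triviality of the Stein factorization). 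The remaining and most delicate point --- upgrading connectedness of the whole exceptional fibre to the one-branch property at each $y\in E$ --- I would address by a local computation in the chart of Definition~\ref{blow-up graph}, where $\widetilde X$ is cut out by $g_i=g_0w_i$ and the coordinate algebra $\mathcal{O}_{X,x}[g_1/g_0,\dots,g_k/g_0]\subset\operatorname{Frac}(\mathcal{O}_{X,x})$ is a domain, using reducedness of $Z$ to exclude the square-root type branching illustrated by the example.
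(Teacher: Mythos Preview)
Your opening reduction---local irreducibility at $y\in E$ amounts to connectedness of $V\setminus E$ for arbitrarily small connected $V\ni y$, equivalently of $\pi(V)\setminus Z$---is exactly the paper's argument. The paper then simply asserts that $U:=\pi(V)$ is \emph{open} and connected in $X$; since an open connected subset of a locally irreducible space is irreducible, $U\setminus Z$ is connected, and the biholomorphism $V\setminus E\cong U\setminus Z$ gives the contradiction. There is no passage to the normalization, no appeal to Zariski's connectedness theorem, and no chart computation.

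Your example with the non-reduced centre $V(x^{2},y^{2})\subset\mathbb{C}^{2}$ is correct and shows that the openness of $\pi(V)$---and hence the lemma in the generality stated---genuinely fails without reducedness; this is a subtlety the paper does not address. That said, your own completion for the reduced case remains only a plan. Zariski's connectedness theorem controls the full fibre $\pi^{-1}(x)$, not $V\setminus E$ for a small $V$ that meets the fibre in a single point, and you yourself flag this distinction as the crux. Likewise, the fact that $\mathcal{O}_{X,x}[g_{1}/g_{0},\dots,g_{k}/g_{0}]\subset\operatorname{Frac}(\mathcal{O}_{X,x})$ is a domain yields only global irreducibility of the affine chart, which you already had from $\overline{\Gamma}$; it does not by itself give that every localization $\mathcal{O}_{\widetilde X,y}$ is a domain, and your non-reduced example shows exactly how a globally irreducible chart can acquire local branches. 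To finish you must either justify that $\pi(V)$ is a genuine neighbourhood of $x$ (this is what the paper claims, and is where reducedness---or smoothness of the centre, as in Hironaka's procedure---would enter), or prove directly that $\mathcal{O}_{\widetilde X,y}$ is a domain; your proposal names the difficulty precisely but stops short of resolving it.
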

\begin{proof}We argue by contradiction.
	Assume that $\widetilde{X}$ is reducible at $\widetilde{x}$. Then, if $\widetilde{U}\subset\widetilde{X}$ is a sufficiently small, open, connected neighborhood of $\widetilde{x}$, there exists a proper decomposition in open subsets $\widetilde{U}=\widetilde{U}_1\cup \widetilde{U}_2$ such that $\widetilde{A}=\widetilde{U}_1\cap \widetilde{U}_2$ is an analytic set of codimension at least $1$ in $\widetilde{U}$. Denote $U=\pi(\widetilde{U})$, which is also open and connected. The local irreducibility of $X$ implies that $U\setminus Z$ is still connected.
	Since $\widetilde{X}\setminus \pi^{-1}(Z)\simeq X\setminus Z$ and $X$ is locally irreducible, we get $\widetilde{A}\subset\pi^{-1}(Z)$ which implies that $\widetilde{U}\setminus \pi^{-1}(Z)$ is disconnected. However, $\widetilde{U}\setminus \pi^{-1}(Z)\simeq U\setminus Z$, and $U\setminus Z$ is connected, which is a contradiction.
\end{proof}

\begin{lemma}\label{lemma-preimagine-conexa}
	Let $X$ be a locally irreducible complex space, $Z\subset X$ a closed subspace of positive codimension, and $\pi:\widetilde{X}\to X$ the blow-up of $X$ along $Z$. If $U\subset X$ is an open, connected set, then $\pi^{-1}(U)$ is connected. 
\end{lemma}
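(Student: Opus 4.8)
The plan is to reduce everything to the complement of the center, where $\pi$ is an isomorphism, and then recover connectedness on all of $\pi^{-1}(U)$ by a density argument together with the elementary fact that the closure of a connected set is connected. The starting observation is that $\pi^{-1}(U\setminus Z)=\pi^{-1}(U)\setminus\pi^{-1}(Z)$, and that $\pi$ restricts to an isomorphism $\widetilde X\setminus\pi^{-1}(Z)\simeq X\setminus Z$; hence $\pi^{-1}(U\setminus Z)$ is homeomorphic to $U\setminus Z$.

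First I would show that $U\setminus Z$ is connected. This is exactly the fact already invoked in the proof of \ref{lemma-blowup-ired}: since $X$ is locally irreducible and $U$ is open and connected, $U$ is irreducible, and because $Z$ has positive codimension, $Z\cap U$ is a proper (nowhere dense) analytic subset, so its complement $U\setminus Z$ remains connected. Transporting this along the isomorphism above, $\pi^{-1}(U\setminus Z)$ is connected.

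Next I would argue that $\pi^{-1}(U\setminus Z)$ is dense in $\pi^{-1}(U)$. Indeed, $\pi^{-1}(Z)$ is the exceptional divisor, which by \ref{blow-up graph} is a Cartier divisor in $\widetilde X$; by the remark following the definition of Cartier divisor, its complement $\widetilde X\setminus\pi^{-1}(Z)$ is dense in $\widetilde X$. Since $\pi^{-1}(U)$ is open and the intersection of a dense set with an open set is dense in that open set, the set $\pi^{-1}(U)\cap\bigl(\widetilde X\setminus\pi^{-1}(Z)\bigr)=\pi^{-1}(U\setminus Z)$ is dense in $\pi^{-1}(U)$.

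Finally I would combine the two: $\pi^{-1}(U)$ is the closure, inside $\pi^{-1}(U)$, of the connected dense subset $\pi^{-1}(U\setminus Z)$, and the closure of a connected set is connected, so $\pi^{-1}(U)$ is connected. The only point genuinely requiring care is the connectedness of $U\setminus Z$, which rests entirely on the local irreducibility of $X$ and the positive codimension of $Z$; the remaining steps are formal, so I do not anticipate a real obstacle. Note that this argument does not even require \ref{lemma-blowup-ired}, since the density of $\widetilde X\setminus\pi^{-1}(Z)$ follows directly from $\pi^{-1}(Z)$ being a Cartier divisor.
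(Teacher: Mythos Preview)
Your argument is correct and is essentially identical to the paper's: both use local irreducibility to get $U\setminus Z$ connected, transport this via $\pi^{-1}(U\setminus Z)\simeq U\setminus Z$, and then invoke density of $\pi^{-1}(U)\setminus\pi^{-1}(Z)$ in $\pi^{-1}(U)$ (coming from $\pi^{-1}(Z)$ being a Cartier divisor) together with the closure-of-connected-is-connected fact. The paper phrases the last step as the chain $\pi^{-1}(U)\setminus\pi^{-1}(Z)\subset\pi^{-1}(U)\subset\overline{\pi^{-1}(U)\setminus\pi^{-1}(Z)}$, which is exactly your density argument.
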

\begin{proof}
	$\pi^{-1}(Z)$ is a Cartier divisor in $\widetilde{X}$, so $\codim_{\widetilde{X}}\pi^{-1}(Z)=1$. Given that $X$ is locally irreducible and $U$ is connected, we have $U\setminus Z$ also connected. Moreover, $U\setminus Z \simeq \pi^{-1}(U\setminus Z)=\pi^{-1}(U)\setminus\pi^{-1}(Z)$. Since $$\pi^{-1}(U)\setminus\pi^{-1}(Z)\subset\pi^{-1}(U)\subset\overline{\pi^{-1}(U)\setminus\pi^{-1}(Z)},$$ we obtain that $\pi^{-1}(U)$ is connected.
\end{proof}

\begin{theorem}\label{blow-up-lck}
	Let $(X,\omega,\theta)$ be a wlcK, locally irreducible complex space. Consider $Z\subset X$ a closed complex subspace of positive codimension and $\pi:\widetilde{X}\to X$ the blow-up of $X$ along $Z$. If $\pi^* \omega$ is wgcK, then $\omega$ is wgcK.
\end{theorem}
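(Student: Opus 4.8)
The plan is to reduce the statement to a purely topological assertion about the Lee forms and then to descend a primitive from $\widetilde X$ to $X$ using the connectedness results of this section. First I would observe that the pullback $\pi^*\omega$ is represented by the local data $(\pi^{-1}(U_\alpha),\varphi_\alpha\circ\pi,f_\alpha\circ\pi)_\alpha$, so by Definition \ref{def:LCK} its Lee form is exactly $\pi^*\theta$. Since a (w)lcK metric is (w)gcK precisely when its Lee form is exact, the hypothesis that $\pi^*\omega$ is wgcK says that $\pi^*\theta$ is exact, and the desired conclusion is that $\theta$ is exact. Thus the whole content of the theorem is the implication: if $\pi^*\theta$ is exact on $\widetilde X$, then $\theta$ is exact on $X$. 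The metric data $\varphi_\alpha$ play no further role.

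Unwinding Definition \ref{def:1-form}, exactness of $\pi^*\theta$ means there is a continuous $g\colon\widetilde X\to\R$ with $g-f_\alpha\circ\pi$ locally constant on $\pi^{-1}(U_\alpha)$ for every $\alpha$. The key claim is that $g$ is constant on each fiber $\pi^{-1}(x)$. Indeed, if $x\in U_\alpha$ then $f_\alpha\circ\pi$ is constant on $\pi^{-1}(x)$ (equal to $f_\alpha(x)$), so $g$ is locally constant there, and it remains to know the fibers are connected. This follows from \ref{lemma-preimagine-conexa}: choosing a fundamental system of connected open neighborhoods $V_n\ni x$ with $\overline{V_{n+1}}\subset V_n$ and $\bigcap_n V_n=\{x\}$, each $\pi^{-1}(V_n)$ is connected, hence so is $\overline{\pi^{-1}(V_n)}$, and $\pi^{-1}(x)=\bigcap_n\overline{\pi^{-1}(V_n)}$ is a decreasing intersection of compact connected sets (using that $\pi$ is proper), therefore connected. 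Consequently $g$ descends set-theoretically to a function $f\colon X\to\R$ with $f\circ\pi=g$; since $\pi$ is a proper surjection it is closed, hence a quotient map, so $f$ is continuous.

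It then remains to check $\theta=df$, that is, that $f-f_\alpha$ is locally constant on each $U_\alpha$. Given $x\in U_\alpha$ and a connected open $V$ with $x\in V\subset U_\alpha$, \ref{lemma-preimagine-conexa} gives that $\pi^{-1}(V)$ is connected; on it the locally constant function $g-f_\alpha\circ\pi=(f-f_\alpha)\circ\pi$ is therefore constant, and since $\pi\big(\pi^{-1}(V)\big)=V$ this forces $f-f_\alpha$ to be constant on $V$. Hence $f-f_\alpha$ is locally constant on $U_\alpha$, so $(X,f)\sim(U_\alpha,f_\alpha)_\alpha$ and $\theta=df$ is exact, i.e. $\omega$ is wgcK.

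The main obstacle is the descent: showing that the primitive $g$ is constant along the fibers of $\pi$ and descends to a continuous primitive of $\theta$ on $X$. Both this step and the final verification rest entirely on \ref{lemma-preimagine-conexa} and on the resulting connectedness of fibers, which is exactly where local irreducibility of $X$ enters. Without that hypothesis the preimage of a connected set may disconnect, so $g$ need not be fiber-constant and the descent fails; this is precisely the phenomenon isolated by \ref{example-local-red}.
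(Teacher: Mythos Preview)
Your proof is correct and follows essentially the same descent strategy as the paper: use \ref{lemma-preimagine-conexa} to show the primitive of $\pi^*\theta$ is fiber-constant, descend it to $X$, and verify it is a primitive of $\theta$.

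One remark on efficiency: your separate argument for connectedness of the fibers (via a nested intersection of compact connected sets) is unnecessary. The very computation you do in the last step already gives it: on a connected $V\subset U_\alpha$ the set $\pi^{-1}(V)$ is connected by \ref{lemma-preimagine-conexa}, and $g-f_\alpha\circ\pi$ is locally constant there, hence constant; so $g=f_\alpha\circ\pi+C$ on $\pi^{-1}(V)$, which is visibly constant on every fiber over $V$. This is exactly how the paper proceeds, and it spares you the properness/compactness bookkeeping.
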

\begin{proof}
	If $\pi^* \omega$ is wgcK, then there exists a smooth function $\widetilde{f}:\widetilde{X}\to\mathbb{R}$ such that $d\widetilde{f}=\pi^* \theta$. We want to prove that $\widetilde{f}$ descends to a smooth function $f:X\to\mathbb{R}$, so we have to show that $\widetilde{f}$ is constant on the fibers of $\pi$. Take $x\in X$. If $x\not \in Z$, then $\pi^{-1}(x)$ consists of a single point, so we only have to study the case $x\in Z$. For $x\in Z$, consider $U$ an open, connected neighborhood of $x$ in $X$ such that $(\omega,\theta)$ is represented by $(\varphi, h)$ on $U$. According to \ref{lemma-preimagine-conexa}, $\pi^{-1}(U)$ is connected. We have $d\widetilde{f}=\pi^*\theta=\pi^* dh=d\pi^*h$, thus $\widetilde{f}=\pi^*h+C$ on $\pi^{-1}(U)$, where $C$ is a real constant. Consequently, $\widetilde{f}$ is constant on each fiber of $\pi$ above $U$. Thus, $\widetilde{f}$ descends to a function $f:X\to\mathbb{R}$, which is smooth, since, if $\theta=\reallywidehat{(U_\alpha,h_\alpha)_\alpha}$, then $f-h_\alpha$ is locally constant on $U_\alpha$ for each $\alpha$. Finally, $e^{-f}\omega$ is weakly K\" ahler, which means that $\omega$ is wgcK.
\end{proof}

\vspace{5pt}

Now, using \ref{blow-up-Kahler} and \ref{blow-up-lck}, we can prove the following generalization for Vaisman's theorem on lcK compact manifolds.

\begin{theorem}\label{vaisman-singular}
	Let $(X,\omega,\theta)$ be a compact, locally irreducible, lcK space. If $X$ admits a K\"ahler metric, then $(X,\omega,\theta)$ is gcK.
\end{theorem}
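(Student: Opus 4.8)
\textbf{The strategy.} The plan is to reduce to the smooth case by passing to a desingularization and then invoking the degenerate strengthening of Vaisman's theorem from \cite{APV}. Concretely, I would take a resolution of singularities $\pi \colon \widetilde{X} \to X$, which exists as a finite composition of blow-ups $X = X_0 \leftarrow X_1 \leftarrow \cdots \leftarrow X_N = \widetilde{X}$ along closed subspaces contained in the singular locus $X_{\mathrm{sing}}$, a set of positive codimension; here $\widetilde{X}$ is a compact complex \emph{manifold} and $\pi$ restricts to a biholomorphism over $X_{\reg}$. The exceptional locus $E = \pi^{-1}(X_{\mathrm{sing}})$ is then a proper analytic subset of $\widetilde{X}$, hence of positive codimension.

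\textbf{Pulling back the data.} The next step is to analyze $\pi^*\omega$. Writing $\omega$ locally as $e^{f_\alpha}\ei\del\delb\varphi_\alpha$ with $\varphi_\alpha$ smooth and strongly psh, the pull-backs $\pi^*\varphi_\alpha$ are smooth and psh, and remain strongly psh away from $E$, since $\pi$ is a local biholomorphism there. Thus $(\pi^{-1}(U_\alpha),\pi^*\varphi_\alpha,\pi^*f_\alpha)$ represents a wlcK form on $\widetilde{X}$, degenerate only along $E$, with Lee form $\pi^*\theta$. Since $X$ is compact \K\ by hypothesis, iterating \ref{blow-up-Kahler} up the tower of blow-ups shows that $\widetilde{X}$ is a compact \K\ manifold. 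Hence $\widetilde{X}$ is a smooth compact \K\ manifold carrying the wlcK metric $\pi^*\omega$, degenerate on the positive-codimension set $E$, and I would apply the degenerate version of Vaisman's theorem in \cite{APV} to conclude that $\pi^*\omega$ is wgcK, that is, $\pi^*\theta$ is exact.

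\textbf{Descending to $X$.} It remains to push this conclusion back down to $X$. I would apply \ref{blow-up-lck} one blow-up at a time, from the top of the tower down. At each stage the relevant space must be locally irreducible, and this is guaranteed inductively by \ref{lemma-blowup-ired}: each blow-up of a locally irreducible space along a positive-codimension center is again locally irreducible, and $X = X_0$ is locally irreducible by hypothesis. Thus \ref{blow-up-lck} propagates the wgcK property downward through $X_{N-1}, \ldots, X_1, X_0$, yielding that $\omega$ itself is wgcK, i.e. $\theta$ is exact. Finally, since $\omega$ is genuinely lcK (not merely wlcK) and its Lee form $\theta$ is exact, $(X,\omega,\theta)$ is gcK, as desired.

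\textbf{The main obstacle.} The essential difficulty is that neither the $\del\delb$-lemma nor Hodge theory is available on the singular space $X$, which is exactly why the argument must migrate to the smooth resolution before Vaisman's dichotomy can be invoked. The crucial subtlety is that $\pi^*\omega$ lands in the \emph{weak} class rather than the strict lcK class, since it degenerates along $E$; an ordinary Vaisman theorem therefore does not suffice, and the degenerate variant of \cite{APV} is indispensable. The other delicate point is the compatibility of the entire tower: one must know simultaneously that \K-ness ascends (\ref{blow-up-Kahler}), that local irreducibility ascends (\ref{lemma-blowup-ired}), and that exactness of the Lee form descends (\ref{blow-up-lck}), so that the single geometric hypothesis that $X$ is \K\ and locally irreducible survives the entire passage up to $\widetilde{X}$ and back.
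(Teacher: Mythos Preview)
Your proposal is correct and follows essentially the same approach as the paper's own proof: resolve $X$ by a finite tower of blow-ups, use \ref{blow-up-Kahler} to carry the \K\ hypothesis up, apply the degenerate Vaisman theorem of \cite{APV} to the wlcK pull-back $\pi^*\omega$ on the smooth top, and then descend wgcK-ness one blow-up at a time via \ref{blow-up-lck}, using \ref{lemma-blowup-ired} to maintain local irreducibility throughout.
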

\begin{proof}
	Since $X$ is a compact complex space, a resolution of singularities for $X$ can be constructed by a finite sequence of blow-ups $$\pi_j:X_j\to X_{j-1}, j=1,\ldots,m,$$ such that $X_m=:\widehat{X}$ is a compact manifold, and $X_0=X$. A repeated application of \ref{blow-up-Kahler} yields that $\widehat{X}$ admits a K\" ahler metric. Denote $\pi=\pi_1\circ\pi_2\circ\cdots\circ\pi_m:\widehat{X}\to X$. Then, $(\widehat{X},\pi^*\omega,\pi^*\theta)$ is a wlcK manifold which admits a K\" ahler metric. By \cite[Lemma 2.5]{APV}, $(\widehat{X},\pi^*\omega,\pi^*\theta)$ is wgcK.
	Also, \ref{lemma-blowup-ired} ensures that $X_j$ is locally irreducible for every $j=1,\ldots,m$. 
	 Now, using \ref{blow-up-lck}, after $m$ steps, we obtain that $\omega$ is wgcK. Since $\omega$ is not degenerate, because it is assumed to be lcK, we deduce that it is gcK. 
\end{proof}

\begin{example}\label{example-local-red}
	The assumption of local irreducibility in \ref{vaisman-singular} cannot be dropped. For this, we present the following counterexample, constructed by Vuletescu:
	
	We take two distinct points $v,w\in\mathbb{P}^n$ and consider the space obtained by identifiying these two points, $X=\mathbb{P}^{n}/(v\simeq w)$. $X$ has one singular point $[v]$, and the germ $(X,[v])$ is biholomorphic to $$\left(\{{(z,t)\in\mathbb{C}^n\times\mathbb{C}^n | z=0 \text{ or } t =0}\},0\right),$$ so $X$ is locally reducible. By \ref{identifying-2-points} proved below,
	$\pi_1(X)=\pi_1(\mathbb{P}^n)*\mathbb{Z}=\mathbb{Z}$. We now give a description for the universal cover $\widetilde{X}$ of $X$. Denote $\widetilde{X}_0=\mathbb{P}^n\times \mathbb{Z}$ and define an equivalence relation on $\widetilde{X}_0$ by setting $(w,k)\sim(v,k+1)$ for all $k\in\mathbb{Z}$. Then, put $\widetilde{X}=\widetilde{X}_0/\sim$. The deck transform $\gamma:\widetilde{X}\to\widetilde{X}$, $\gamma([z,k])=[z,k+1]$ generates $\pi(X)$, where $[z,k]$ denotes the class of equivalence of $(z,k)\in\mathbb{P}^n\times\mathbb{Z}$. 
	
	Note that each globally irreducible component of $\widetilde{X}$ is a copy of $\mathbb{P}^n$ and comes from a connected component $\widetilde{X}_{0,k}:=\mathbb{P}^n\times\{k\}$ of $\widetilde{X}_0$, so we will denote it $\widetilde{X}_k$.
	Now, we construct the following K\" ahler metrics on $\widetilde{X}$:
	\begin{itemize}
		\item $\widetilde{\omega}_1$, which is the Fubini-Study metric $\omega_{FS}$ on $\widetilde{X}_k$;
		\item $\widetilde{\omega}_2$, which is defined by $\left({\widetilde{\omega}_2}\right)_{|\widetilde{X}_k}:=2^k\omega_{FS}$, for every $k\in\mathbb{Z}$.
	\end{itemize} 
	 Finally, $\widetilde{\omega}_1$ defined this way is a K\" ahler metric on $\widetilde{X}$ which is invariant under $\gamma$, hence it descends to a K\" ahler metric on $X$. However, according to the proof of \cite[Theorem 3.10]{PS}, $\widetilde{\omega}_2$ induces a lcK (non-gcK) metric on $X$. 
	
	\begin{lemma}\label{identifying-2-points}
		Let $X$ be a path-connected and locally contractible topological space and $x,y \in X$ two distinct points. Denote by $\widehat{X}$ the topological space obtained by identifying $x$ and $y$ in $X$. Then we have $\pi_1(\widehat{X})=\pi_1(X)*\mathbb{Z}$.
	\end{lemma}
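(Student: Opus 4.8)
The plan is to avoid working directly with $\widehat X$ — whose identified point may fail to have a simply connected punctured neighborhood — and instead replace the point-identification by the homotopy-equivalent operation of attaching an arc. Concretely, I would form the space
\[
Y := X \sqcup_{\{0\mapsto x,\, 1\mapsto y\}} [0,1],
\]
obtained by glueing an interval to $X$ with its two endpoints sent to $x$ and $y$. Collapsing the arc $[0,1]\subset Y$ to a single point clearly yields $\widehat X$. Since $X$ is locally contractible, each of $x,y$ is a neighborhood deformation retract, so $\{x,y\}\hookrightarrow X$ is a cofibration; this makes $[0,1]\hookrightarrow Y$ a cofibration as well, and as $[0,1]$ is contractible the quotient map $Y\to Y/[0,1]=\widehat X$ is a homotopy equivalence. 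Thus $\pi_1(\widehat X)\cong\pi_1(Y)$, and it remains to compute the latter.

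Next I would apply van Kampen's theorem to $Y$. Using local contractibility, choose small disjoint contractible open neighborhoods $N_x\ni x$ and $N_y\ni y$ in $X$. Put $U := Y\setminus\{\text{midpoint of the arc}\}$, which deformation retracts onto $X$, so $\pi_1(U)\cong\pi_1(X)$; and put $V := N_x\cup N_y\cup(0,1)$, the open arc together with the two neighborhoods, which is contractible since it is two contractible sets joined by an arc, so $\pi_1(V)=1$. Their intersection splits as $U\cap V = C_0\sqcup C_1$, where $C_0 = N_x\cup[0,\tfrac12)$ and $C_1 = N_y\cup(\tfrac12,1]$ are two disjoint contractible pieces. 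Feeding this into van Kampen gives $\pi_1(Y)\cong\pi_1(X)*\mathbb{Z}$: the free factor $\mathbb{Z}$ is generated by the loop that runs from a basepoint in $C_0$ across the arc inside $V$ to $C_1$ and returns through $X\subset U$, and no relations beyond this arise because $V$ and both components $C_0,C_1$ are simply connected.

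The one genuinely delicate point is that $U\cap V$ is disconnected, so the naive (connected-intersection) form of van Kampen does not directly apply. I expect this to be the main obstacle, and I would handle it either by invoking the fundamental-groupoid version of the Seifert--van Kampen theorem with the two points of $C_0,C_1$ as base objects, or, equivalently, by phrasing the computation as the standard fact that attaching a single $1$-cell to a path-connected space along two points introduces exactly one new free generator. A secondary technical checkpoint is the cofibration claim underlying the homotopy equivalence $Y\simeq\widehat X$, which rests precisely on the local contractibility hypothesis (and, for the separation of $N_x$ from $N_y$, on the ambient spaces being Hausdorff, as they are in all our applications).
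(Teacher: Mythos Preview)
Your argument is correct and follows the same overall strategy as the paper: replace the point-identification by the homotopy-equivalent operation of attaching an arc, and then invoke van Kampen. The paper's proof differs from yours only in the second step: rather than applying van Kampen directly to $Y=X\cup_{x,y}[0,1]$ with your open cover (which, as you note, has a disconnected intersection and therefore requires the groupoid form of the theorem), the paper observes that since $x$ and $y$ are already joined by a path inside $X$, the space $Y$ is further homotopy equivalent to $X\vee S^1$; van Kampen on a wedge then gives $\pi_1(X)\ast\mathbb{Z}$ immediately, with a connected intersection and no extra care needed. Your route is more explicit and self-contained; the paper's shortcut via $X\vee S^1$ is slicker and sidesteps precisely the delicate point you flagged.
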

	\begin{proof}
		It is easy to see that $\widehat{X}$ is homotopically equivalent to $X$ with a glued segment between $x$ and $y$, \ie 
		\[
		\faktor{\left(X \coprod \ [0, 1]\right)}{\sim}, \text{ where } x \sim 0 \text{ and } y \sim 1.
		\]
		Since $x$ and $y$ can be connected in $X$ along a path and $X$ is locally contractible, this is further homotopically equivalent to $X \vee S^1$. A straightforward application of Van Kampen's theorem gives the desired conclusion.
	\end{proof}
	 
\end{example}

\vspace{5pt}

\textbf{Acknowledgments.} We would like to thank Liviu Ornea for our useful discutions while working on this article and Victor Vuletescu for constructing the example which provided the key assumption we were missing in our theorem.


\begin{thebibliography}{100}
	
	\bibitem[1]{APV} D. Angella, M. Parton, V. Vuletescu: {\em On locally conformally K\" ahler threefolds with algebraic dimension two}, arXiv:2005.09565v3.

	\bibitem[2]{fuji75} A. Fujiki: {\em On the blowing down of analytic spaces}, Publ. RIMS Kyoto Univ. \textbf{10} (1975), 473-507.
	
	\bibitem[3]{fuji78} A. Fujiki: {\em Closedness of the Douady Spaces of Compact \K \ Spaces}, Publ. RIMS Kyoto Univ. \textbf{14} (1978), 1-52.
		
	\bibitem[4]{grauert} H. Grauert: {\em \"Uber Modifikationen und exzeptionelle analytische Mengen}, Math. Ann. \textbf{146} (1962), 331-368.
	
	\bibitem[5]{mich} M. Micha\l ek: {\em Notes on Kebekus lectures on differential forms on singular spaces}, https://www.impan.pl/~pragacz/kebmich1.pdf.
		
	\bibitem[6]{mmo} A. Moroianu, S. Moroianu, L. Ornea: {\em Locally conformally \K \ manifolds with holomorphic Lee field}, arXiv:1712.0582 (2017).
	
	\bibitem[7]{moish} B. Moishezon: {\em Singular \K ian spaces}, Proceedings of the International Conference on Manifolds and Related Topics in Topology (Tokyo 1973) (1975), 343-351. 
	
	\bibitem[8]{alex} A. Otiman: {\em Morse-Novikov cohomology of locally conformally \K \ surfaces}, Math. Z. {\bf 289} (2018), Issue 1-2, 605-628.
	
	\bibitem[9]{OV} L. Ornea, M. Verbitsky: {\em A report on locally conformally \K \ manifolds}, "Harmonic Maps and Differential Geometry" Contemporary Mathematics {\bf{542}} (2011), 135-150. 
	
	\bibitem[10]{peternell} Th. Peternell: {\em Modifications. Several complex variables VII}, 285–317, Encyclopaedia Math. Sci., \textbf{74}, Springer, Berlin, (1994).
	
	\bibitem[11]{PS} O. Preda, M. Stanciu: {\em Coverings of locally conformally Kähler complex spaces}, Math. Z. {\bf 298} (2021), 639-651. 
	
	\bibitem[12]{vaisman1976} I. Vaisman: {\em On locally conformal almost \K \ manifolds}, Israel J. Math. {\bf{24} No. 3-4} (1976), 338-351. 
	
	\bibitem[13]{vasiman1980} I. Vaisman, {\em On locally and globally conformal K\" ahler manifolds}, Trans. Amer. Math. Soc.
	{\bf 262} (1980), 533–542.
	
	\bibitem[14]{var84} J. Varouchas: {\em Stabilité de la classe des variétés K\" ahl\' eriennes par certains morphismes propres}, Invent. Math.  {\bf 77}, Issue 1, (1984), 117-127.  
	
	\bibitem[15]{var86} J. Varouchas: {\em Sur l'image d'une vari\' et\' e K\" ahl\' erienne compacte}, Fonctions de Plusieurs Variables Complexes {\bf V} (1986), 245-259.  
	
	\bibitem[16]{var89}  J. Varouchas: {\em K\" ahler spaces and proper open morphisms}, Math. Ann. {\bf 283} (1989), 13-52.
	
\end{thebibliography}
\end{document}